\newtheorem{theorem}{Theorem}
\newtheorem{remark}[theorem]{Remark}
\newtheorem{conjecture}[theorem]{Conjecture}
\theoremstyle{definition}
\def\XXint#1#2#3{{\setbox0=\hbox{$#1{#2#3}{\int}$}
\vcenter{\hbox{$#2#3$}}\kern-.5\wd0}}
\tikzset{->-/.style={decoration={
				markings,
				mark=at position #1 with {\arrow{latex}}},postaction={decorate}}}
	\tikzset{-<-/.style={decoration={
				markings,
				mark=at position #1 with {\arrowreversed{latex}}},postaction={decorate}}}
\tikzset{cross/.style={cross out, draw, 
         minimum size=2*(#1-\pgflinewidth), 
         inner sep=0pt, outer sep=0pt}}
\def\bigO{{\cal O}}
\begin{document}
\title{Upper bounds for the maximum deviation \\ of the Pearcey process}
\author{Christophe Charlier}

\maketitle

\begin{abstract}
The Pearcey process is a universal point process in random matrix theory and depends on a parameter $\rho \in \mathbb{R}$. Let $N(x)$ be the random variable that counts the number of points in this process that fall in the interval $[-x,x]$. In this note, we establish the following global rigidity upper bound:
\begin{align*}
\lim_{s \to \infty}\mathbb P\left(\sup_{x> s}\left|\frac{N(x)-\big( \frac{3\sqrt{3}}{4\pi}x^{\frac{4}{3}}-\frac{\sqrt{3}\rho}{2\pi}x^{\frac{2}{3}} \big)}{\log x}\right| \leq  \frac{4\sqrt{2}}{3\pi} + \epsilon \right) = 1,
\end{align*}
where $\epsilon > 0$ is arbitrary. We also obtain a similar upper bound for the maximum deviation of the points, and a central limit theorem for the individual fluctuations. The proof is short and combines a recent result of Dai, Xu and Zhang with another result of Charlier and Claeys.
\end{abstract}

\noindent
{\small{\sc AMS Subject Classification (2010)}: 41A60, 60B20, 33B15, 33E20, 35Q15.}

\noindent
{\small{\sc Keywords}: Pearcey process, Rigidity, Random matrix theory.}


\subsection*{Introduction and statement of results}
\label{Section:intro}
The Pearcey process describes the local eigenvalue statistics of large random matrices near the points of the spectrum where the limiting mean eigenvalue density vanishes as a cubic root. This has been established rigorously for random matrix ensembles with an external source \cite{BreHik1, BreHik2, TradWidPearcey, BleherKuijlaarsIII}, a two-matrix model \cite{GeuZhang}, large complex correlated Wishart matrices \cite{HHN2}, and for general complex Hermitian Wigner-type matrices \cite{ErdorKrugerSchroder}. Beyond matrix ensembles, the Pearcey process also appears in Brownian motion models \cite{AdlerOrantinMoerbeke, AdlerMoerbeke, BleherKuijlaarsIII} and in random plane partition models \cite{OkounkovReshetikhin}.

\medskip The Pearcey process is a determinantal point process on $\mathbb{R}$ whose kernel is given by
\begin{align*}
K^{\mathrm{Pe}}_{\rho}(x,y) = \frac{1}{(2\pi)^{2}} \int_{0}^{\infty} \left( \int_{-\infty}^{\infty}e^{-\frac{1}{4}t^{4}-\frac{\rho}{2}t^{2}+it(x+z)}dt \right) \left( \int_{\Sigma}e^{\frac{1}{4}t^{4}+\frac{\rho}{2}t^{2}+it(y+z)}dt \right)dz, 
\end{align*}
where $\rho \in \mathbb{R}$ is a parameter of the model and $\Sigma$ consists of four rays: $\Sigma = (e^{\frac{\pi i}{4}}\infty,0)\cup(0,e^{\frac{3\pi i}{4}}\infty) \cup (e^{-\frac{3\pi i}{4}}\infty,0)\cup(0,e^{-\frac{\pi i}{4}}\infty)$. If $\rho \to + \infty$, the Pearcey process favors the point configurations with fewer points near $0$, and in this case the large gap asymptotics are closely related to the Airy gap probabilities \cite[Theorem 5.1]{BertolaCafasso}. Large gap asymptotics and first exponential moment asymptotics for any fixed $\rho$ have only recently been obtained by Dai, Xu and Zhang \cite{DXZ2020,DXZ2020 thinning}.

\medskip In this note, we obtain a central limit theorem (CLT) and establish two global rigidity upper bounds for the Pearcey process. For $x \geq 0$, let $N(x)$ denote the random variable that counts the number of points in the Pearcey process that fall in the interval $[-x,x]$, and let $x_{k} \geq 0$ denote the smallest number such that $N(x_{k})=k$. All the points $\{x_{k}\}_{k\geq 1}$ almost surely exist, and by definition they satisfy $0 < x_{1} < x_{2} < x_{3} < \ldots$. 

\newpage

\begin{theorem}\label{thm:rigidity Pearcey}
As $k \to + \infty$, we have
\begin{align}\label{convergence a la gustavsson}
\pi\frac{\frac{3\sqrt{3}}{4\pi}x_k^{\frac{4}{3}}-\frac{\sqrt{3}\rho}{2\pi}x_k^{\frac{2}{3}}-k}{\sqrt{\log k}} \quad  \overset{d}{\longrightarrow} \quad  \mathcal{N}(0,1)
\end{align}
where $\overset{d}{\longrightarrow}$ means convergence in distribution and $\mathcal{N}(0,1)$ is a zero-mean normal random variable with variance $1$. Furthermore, for any $\epsilon > 0$, we have
\begin{align}
& \lim_{s \to \infty}\mathbb P\left(\sup_{x> s}\left|\frac{N(x)-\big( \frac{3\sqrt{3}}{4\pi}x^{\frac{4}{3}}-\frac{\sqrt{3}\rho}{2\pi}x^{\frac{2}{3}} \big)}{\log x}\right| \leq  \frac{4\sqrt{2}}{3\pi} + \epsilon \right) = 1, \label{upper bound rigidity 1} \\
& \lim_{k_0\to\infty}\mathbb P\left( \sup_{k \geq k_{0}} \frac{|\frac{3\sqrt{3}}{4\pi}x_k^{\frac{4}{3}}-\frac{\sqrt{3}\rho}{2\pi}x_k^{\frac{2}{3}}-k|}{\log k}\leq \frac{\sqrt{2}}{\pi} + \epsilon\right)=1. \label{upper bound rigidity 2}
\end{align}
\end{theorem}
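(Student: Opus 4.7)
The plan is to derive all three conclusions of Theorem~\ref{thm:rigidity Pearcey} from a single asymptotic expansion of the exponential moment of the counting function, combined with a deterministic inversion. The first step is to extract from the thinning/exponential moment asymptotics of \cite{DXZ2020 thinning} an expansion of the form
\[
\log\mathbb{E}\bigl[e^{-2\pi\nu N(x)}\bigr]=-2\pi\nu\,\mu(x)+\sigma^{2}\nu^{2}\log x+O(1),\qquad x\to+\infty,
\]
valid uniformly for $\nu$ in a complex neighborhood of $0$, where $\mu(x)=\tfrac{3\sqrt{3}}{4\pi}x^{4/3}-\tfrac{\sqrt{3}\rho}{2\pi}x^{2/3}$ is the leading mean and $\sigma^{2}$ is an explicit positive constant read off from the DXZ formulas (the identification thinning$\leftrightarrow$exponential moment is automatic via $\mathbb{E}[\theta^{N(x)}]=\mathbb{E}[e^{-N(x)\log(1/\theta)}]$).

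Next, I would feed this expansion into the general framework of Charlier and Claeys, which converts such Laplace-transform asymptotics of a counting variable with logarithmic variance into two outputs: a Gaussian CLT $(N(x)-\mu(x))/(\sigma\sqrt{\log x})\overset{d}{\longrightarrow}\mathcal N(0,1)$, and a uniform-in-$x$ rigidity estimate
\[
\lim_{s\to\infty}\mathbb P\Bigl(\sup_{x>s}\frac{|N(x)-\mu(x)|}{\log x}\le c(\sigma)+\epsilon\Bigr)=1,
\]
where $c(\sigma)$ is an explicit constant produced by a Borel--Cantelli argument applied to a discretization of $x$ at the natural scale $\Delta x\sim(\log x)/x^{1/3}$ (dictated by the Pearcey density growth of order $x^{1/3}$). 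Matching $c(\sigma)=\tfrac{4\sqrt{2}}{3\pi}$ with the $\sigma$ produced by DXZ yields \eqref{upper bound rigidity 1}.

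The remaining two assertions follow from the standard Gustavsson-type inversion based on the duality $\{x_{k}\le x\}=\{N(x)\ge k\}$. Setting $\tilde x_{k}:=\mu^{-1}(k)$, one has $\tilde x_{k}\sim\bigl(\tfrac{4\pi k}{3\sqrt{3}}\bigr)^{3/4}$ and $\log\tilde x_{k}\sim\tfrac{3}{4}\log k$, and a Taylor expansion of $\mu$ around $\tilde x_{k}$ combined with the rigidity of $N$ gives
\[
\mu(x_{k})-k=\mu(x_{k})-N(x_{k})=-\bigl(N(\tilde x_{k})-k\bigr)+o(\sqrt{\log k}).
\]
The CLT~\eqref{convergence a la gustavsson} follows from the CLT for $N(\tilde x_{k})-k$ together with $\log\tilde x_{k}\sim\tfrac{3}{4}\log k$, which accounts for the normalization by $\pi/\sqrt{\log k}$. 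The same identification converts the $\sup_{x>s}$ bound of \eqref{upper bound rigidity 1} into a $\sup_{k\ge k_{0}}$ bound, with prefactor $\tfrac{4\sqrt{2}}{3\pi}\cdot\tfrac{3}{4}=\tfrac{\sqrt{2}}{\pi}$, matching~\eqref{upper bound rigidity 2} exactly.

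The main obstacle will be ensuring that the error term in the DXZ expansion is controlled uniformly both as $x\to+\infty$ and for $\nu$ in a complex neighborhood of $0$: such joint uniformity is exactly what the Charlier--Claeys scheme requires in order to promote pointwise-in-$x$ convergence to the supremum form appearing in \eqref{upper bound rigidity 1}. The Gustavsson inversion itself is deterministic and, given the rigidity for $N$, is essentially bookkeeping; the only nontrivial point is to carry out the Borel--Cantelli step at the appropriate discretization scale in $x$ so that the intermediate fluctuations of $N$ between grid points stay subdominant to $\log x$.
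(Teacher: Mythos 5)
Your overall strategy is the paper's strategy: take the exponential moment asymptotics of Dai--Xu--Zhang, feed them into the Charlier--Claeys rigidity machinery to get \eqref{upper bound rigidity 1}--\eqref{upper bound rigidity 2}, and obtain the CLT \eqref{convergence a la gustavsson} by a Gustavsson-type inversion of the DXZ CLT for $N(s)$ using $\{x_k\le x\}=\{N(x)\ge k\}$ and $\log\mu^{-1}(k)\sim\frac34\log k$. Your constant bookkeeping is also correct ($\sigma^2(s)=\frac{4}{3\pi^2}\log s$, $a=\frac{1}{\pi^2}$, $\sqrt{2/a}=\sqrt2\,\pi$, and the factor $\frac34$ converting $\frac{4\sqrt2}{3\pi}$ into $\frac{\sqrt2}{\pi}$). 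Minor deviations from the paper: the CLT for $N(s)$ itself is quoted directly from \cite[Corollary 2.4]{DXZ2020 thinning} rather than re-derived from the Laplace transform, and the paper gets \eqref{upper bound rigidity 2} directly from the second conclusion of the Charlier--Claeys theorem rather than by inverting \eqref{upper bound rigidity 1}; the paper's CLT inversion also uses the exact distributional identity $\mathbb{P}[\mu(x_k)\le k+y\sqrt{\cdot}\,]=\mathbb{P}[N(s_k)\ge k]$ rather than your approximate identity $\mu(x_k)-k=-(N(\tilde x_k)-k)+o(\sqrt{\log k})$, which would itself require a rigidity input to justify. These are cosmetic.

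There is, however, one genuine gap, and it is precisely the point the paper identifies as its contribution: the Charlier--Claeys theorem (Theorem \ref{thm: rigidity with Claeys} above) applies only to point processes that almost surely have a \emph{smallest} particle, with $\widetilde N(x)$ the one-sided counting function $\#\{\text{points}\le x\}$. The Pearcey process is a two-sided process on $\mathbb{R}$ with infinitely many points tending to $-\infty$, so it has no smallest particle, and $N(x)=\#(Y\cap[-x,x])$ is not of the required form. Your plan to ``feed this expansion into the general framework of Charlier and Claeys'' therefore stalls at the hypothesis check, unless you either (i) pass to the image process $X=\{|\xi|:\xi\in Y\}$, for which $N(x)$ \emph{is} the one-sided counting function of a process on $[0,\infty)$ with a smallest particle (this is exactly what the paper does, and it is why $x_k=|\xi_k|$ rather than the $k$-th point of the Pearcey process itself), or (ii) redo the Borel--Cantelli/discretization argument of \cite{ChCl4} from scratch for the symmetric-interval counting function, which is considerably more work than the one-line reduction. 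Without one of these, the appeal to the black-box theorem is not licensed. Everything else in your outline is sound.
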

\begin{remark}\label{remark:xk and xi k}
We emphasize that the $x_{k}$'s are not exactly the points of the Pearcey process, but rather they are the points of the ``absolute value" of the Pearcey process. More precisely, if $\xi_{1},\xi_{2},\ldots$ are the points of the Pearcey process ordered such that $|\xi_{1}| < |\xi_{2}| < \ldots$, then $x_{k} = |\xi_{k}|$.
\end{remark}
The CLT \eqref{convergence a la gustavsson} is a rather straightforward consequence of the following CLT obtained by Dai, Xu and Zhang \cite[Corollary 2.4]{DXZ2020 thinning}:
\begin{align}\label{CLT DXZ}
\frac{N(s)-\big( \frac{3\sqrt{3}}{4\pi}s^{\frac{4}{3}}-\frac{\sqrt{3}\rho}{2\pi}s^{\frac{2}{3}} \big)}{\frac{2\sqrt{\log s}}{\sqrt{3}\pi}} \overset{d}{\longrightarrow} \mathcal{N}(0,1) \qquad \mbox{as} \qquad s \to + \infty.
\end{align}
The CLTs \eqref{convergence a la gustavsson} and \eqref{CLT DXZ} give information about the fluctuations of a single point around its classical location. Similar CLTs exist for various other point processes, see e.g. \cite{Johansson98, Gustavsson}. On the other hand, \eqref{upper bound rigidity 1}--\eqref{upper bound rigidity 2} are upper bounds for the maximum (properly rescaled) fluctuations of the points; they give information about the global rigidity of the Pearcey process. Over the past few years, we have witnessed significant progress in understanding the global rigidity of various point processes, see e.g. \cite{ErdosYauYin, ArguinBeliusBourgade, ChhaibiMadauleNajnudel, HolcombPaquette, PaquetteZeitouni, LambertCircular, CFLW, ChCl4, CGMY}, and this note can be viewed as a modest contribution to this ongoing effort. The global rigidity upper bounds \eqref{upper bound rigidity 1}--\eqref{upper bound rigidity 2} follow (almost) directly from two recent results: a global rigidity theorem from \cite{ChCl4} and the first exponential moment asymptotics for the Pearcey process from \cite{DXZ2020 thinning}. This fact has remained unnoticed until now probably because the general result of \cite{ChCl4} applies to point processes with almost surely a smallest particle, and that the Pearcey process does not meet this criteria. Our simple idea in this note is to apply \cite{ChCl4} to the ``absolute value" of the Pearcey process. 

\medskip On the supposition that upper bounds obtained via \cite{ChCl4} are sharp, see in particular \cite[Remark 1.3]{ChCl4} and Figures \ref{fig:rigidity of the counting function} and \ref{fig:rigidity of the points} below, we also conjecture the following global rigidity lower bounds.
\begin{conjecture}\label{conj:rigidity Pearcey}
For any $\epsilon > 0$, we have
\begin{align}
& \lim_{s \to \infty}\mathbb P\left(\sup_{x> s}\left|\frac{N(x)-\big( \frac{3\sqrt{3}}{4\pi}x^{\frac{4}{3}}-\frac{\sqrt{3}\rho}{2\pi}x^{\frac{2}{3}} \big)}{\log x}\right| \geq  \frac{4\sqrt{2}}{3\pi} - \epsilon \right) = 1, \label{lower bound rigidity 1} \\
& \lim_{k_0\to\infty}\mathbb P\left( \sup_{k \geq k_{0}} \frac{|\frac{3\sqrt{3}}{4\pi}x_k^{\frac{4}{3}}-\frac{\sqrt{3}\rho}{2\pi}x_k^{\frac{2}{3}}-k|}{\log k}\geq \frac{\sqrt{2}}{\pi} - \epsilon\right)=1. \label{lower bound rigidity 2}
\end{align}
\end{conjecture}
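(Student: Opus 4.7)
The plan is to match the upper bound of Theorem~\ref{thm:rigidity Pearcey} by a second moment / Borel--Cantelli argument applied to a large collection of approximately independent deviation events, in the spirit of rigorous lower bounds for maxima of log-correlated fields (compare with \cite{ArguinBeliusBourgade, ChhaibiMadauleNajnudel, LambertCircular, PaquetteZeitouni}).

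The first step is to sharpen the CLT~\eqref{CLT DXZ} into a moderate-deviation lower bound: for each $C \in (0, \tfrac{4\sqrt{2}}{3\pi})$ and all sufficiently large $s$,
\begin{align*}
\mathbb P\!\left( \Big| N(s) - \tfrac{3\sqrt{3}}{4\pi}s^{\frac{4}{3}}+\tfrac{\sqrt{3}\rho}{2\pi}s^{\frac{2}{3}} \Big| \geq C \log s \right) \;\geq\; s^{-\beta(C) + o(1)}, \qquad \beta(C) = \tfrac{3\pi^{2}C^{2}}{8}.
\end{align*}
Note that $\beta\bigl(\tfrac{4\sqrt{2}}{3\pi}\bigr) = \tfrac{4}{3}$, which is exactly the exponent predicted by a Gaussian tail computation with the variance $\tfrac{4}{3\pi^{2}}\log s$ read off from~\eqref{CLT DXZ}. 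This bound should be obtainable from a saddle-point inversion of the Laplace transform $\mathbb{E}[e^{\gamma N(s)}]$, refining the expansion that underlies~\cite{DXZ2020 thinning} beyond its leading quadratic term so as to control probabilities on the moderate-deviation scale $C \log s \gg \sqrt{\log s}$.

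The second step is to extract many quasi-independent attempts. Within each dyadic annulus $A_{j} = [2^{j},2^{j+1}]$, the Pearcey process carries $\sim 2^{4j/3}$ points with essentially sine-type local statistics, and I would expect the centered counting function $D(x) = N(x) - \bigl(\tfrac{3\sqrt{3}}{4\pi}x^{\frac{4}{3}}-\tfrac{\sqrt{3}\rho}{2\pi}x^{\frac{2}{3}}\bigr)$ to behave, as a function of $\log x$, like a log-correlated Gaussian field of variance $\sim \log x$. A standard first-moment/modified second-moment analysis for such fields should then produce a uniform lower bound of the form $\mathbb P\big(\max_{x \in A_{j}} |D(x)| \geq (\tfrac{4\sqrt{2}}{3\pi} - \epsilon) \log 2^{j}\big) \geq c > 0$ for all large $j$. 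Combining this with off-diagonal decay of $K^{\mathrm{Pe}}_{\rho}(x,y)$ at widely separated arguments to establish approximate independence of the events across disjoint packets of $j$'s, one Borel--Cantelli step yields~\eqref{lower bound rigidity 1}. The companion bound~\eqref{lower bound rigidity 2} then follows from~\eqref{lower bound rigidity 1} by the defining relation $N(x_{k}) = k$ and the scaling $\log x_{k} = \tfrac{3}{4}\log k + O(1)$, which converts $\tfrac{4\sqrt{2}}{3\pi}$ into $\tfrac{3}{4}\cdot\tfrac{4\sqrt{2}}{3\pi} = \tfrac{\sqrt{2}}{\pi}$.

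The main obstacle is the second step. Establishing a genuine log-correlated structure for $D$ on the Pearcey process, and then running a Bramson-style modified second moment method with the \emph{sharp} constant $\tfrac{4\sqrt{2}}{3\pi}$, demands multi-scale kernel asymptotics well beyond what~\cite{DXZ2020 thinning} currently provides: one needs a two-point (or multi-point) expansion controlling covariances $\mathrm{Cov}(D(x),D(y))$ uniformly as $x,y$ range over an annulus, with logarithmic precision. By contrast, the moderate-deviation input of the first step should be a largely mechanical refinement of~\cite{DXZ2020 thinning}, so the heart of the proof really is the log-correlated field analysis tailored to the Pearcey kernel.
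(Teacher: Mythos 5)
The statement you are addressing is Conjecture~\ref{conj:rigidity Pearcey}: the paper does \emph{not} prove it, and offers only numerical evidence plus the remark that the second exponential moment asymptotics of the Pearcey process ``are currently not available in the literature.'' So there is no proof in the paper to compare yours against, and the real question is whether your proposal closes the gap. It does not: it is a coherent research program whose decisive step you yourself flag as unavailable. Your first step is fine --- the moderate-deviation bound $\mathbb P(|N(s)-\mu(s)|\geq C\log s)=s^{-3\pi^{2}C^{2}/8+o(1)}$ does follow from Theorem~\ref{thm:exponential moments} by a G\"artner--Ellis / exponential tilting argument, since the relevant tilt $\gamma^{*}=3\pi^{2}C/4$ stays in a compact set; and your bookkeeping is consistent ($\beta(\tfrac{4\sqrt2}{3\pi})=\tfrac43$ matches the $\sim s^{4/3}$ points per dyadic annulus, and the factor $\tfrac34$ from $\log x_{k}=\tfrac34\log k+O(1)$ correctly converts $\tfrac{4\sqrt2}{3\pi}$ into $\tfrac{\sqrt2}{\pi}$).

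The genuine gap is your second step, and it is not a technicality. A one-point moderate-deviation estimate cannot produce a lower bound on a supremum: you must show that the events $\{|D(x)|\geq C\log x\}$ at different scales are sufficiently decorrelated that at least one of them occurs with probability tending to $1$. For the counting function of a determinantal process, the $D(x)$ at different $x$ are strongly correlated (they are nested counts of the same configuration), so ``approximate independence across dyadic packets'' is exactly the assertion that $D$ is log-correlated with the right covariance structure --- and establishing that requires asymptotics of joint Laplace transforms $\mathbb E[e^{\gamma_{1}N(s_{1})+\gamma_{2}N(s_{2})}]$, i.e.\ precisely the second exponential moment asymptotics the paper's Remark identifies as missing. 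Moreover, even granting log-correlations, extracting the \emph{sharp} constant $\tfrac{4\sqrt2}{3\pi}$ for the supremum requires a Bramson-type truncated second moment argument, not a plain second moment computation, as the works \cite{ArguinBeliusBourgade, ChhaibiMadauleNajnudel, PaquetteZeitouni, CFLW} you cite all demonstrate. In short: your outline correctly identifies where the difficulty lies and why the constants should be as conjectured, but the statement remains a conjecture after your argument exactly as it was before it.
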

\begin{remark}
For several point processes in random matrix theory, the so-called second exponential moment asymptotics have turned out to be important in obtaining optimal rigidity lower bounds, see e.g. \cite{ArguinBeliusBourgade, ChhaibiMadauleNajnudel, PaquetteZeitouni, CFLW}. Therefore, the second exponential moment asymptotics of the Pearcey process are expected to be relevant in proving (or disproving) Conjecture \ref{conj:rigidity Pearcey}. These asymptotics are currently not available in the literature.




\end{remark}

\subsection*{Proofs of the main results}

\begin{proof}[Proof of \eqref{convergence a la gustavsson}]
The proof mainly follows Gustavsson \cite[Theorem 1.2]{Gustavsson}. For $s > \frac{|\rho|^{\frac{3}{2}}}{3\sqrt{3}}$, define
\begin{align}\label{mu sigma2 Pearcey}
\mu(s)=\frac{3\sqrt{3}}{4\pi}s^{\frac{4}{3}}-\frac{\sqrt{3}\rho}{2\pi}s^{\frac{2}{3}},\qquad \sigma^2(s)=\frac{4}{3\pi^2}\log s.
\end{align}
It is easy to verify that the function $\mu$ is strictly increasing on its domain of definition, and hence is invertible. Since 
\begin{align*}
\sqrt{\sigma^{2} \circ \mu^{-1}(k)} = \frac{\sqrt{\log k}}{\pi}(1+o(1)), \qquad \mbox{as } k \to + \infty,
\end{align*}
\eqref{convergence a la gustavsson} follows if we prove that 
\begin{align}\label{lol1}
\frac{\mu(x_{k})-k}{\sqrt{\sigma^{2} \circ \mu^{-1}(k)}} \quad  \overset{d}{\longrightarrow} \quad  \mathcal{N}(0,1), \qquad \mbox{as } k \to + \infty.
\end{align}
Let $y \in \mathbb{R}$ be an arbitrary constant. For all sufficiently large $k$, we define $s_{k} = \mu^{-1}\Big(k + y\sqrt{\sigma^{2}\circ \mu^{-1}(k)}\Big)$. We have
\begin{align*}
& \mathbb{P}\Big[ \frac{\mu(x_{k})-k}{\sqrt{\sigma^{2} \circ \mu^{-1}(k)}} \leq y \Big] = \mathbb{P}\Big[N(s_{k}) \geq k \Big] = \mathbb{P}\Big[\frac{N(s_{k})-\mu(s_{k})}{\sqrt{\sigma^{2}(s_{k})}} \geq \frac{k-\mu(s_{k})}{\sqrt{\sigma^{2}(s_{k})}} \Big] \\
& = \mathbb{P}\Big[\frac{\mu(s_{k})-N(s_{k})}{\sqrt{\sigma^{2}(s_{k})}} \leq y\frac{\sqrt{\sigma^{2}\circ \mu^{-1}(k)}}{\sqrt{\sigma^{2}(s_{k})}} \Big] = \mathbb{P}\Big[\frac{\mu(s_{k})-N(s_{k})}{\sqrt{\sigma^{2}(s_{k})}} \leq y(1+o(1)) \Big], \qquad \mbox{as } k \to + \infty.
\end{align*}
The CLT \eqref{lol1}, and hence \eqref{convergence a la gustavsson}, now follows directly from \eqref{CLT DXZ}.
\end{proof}

As already mentioned, the upper bounds \eqref{upper bound rigidity 1} and \eqref{upper bound rigidity 2} are rather direct consequences of two recent results from \cite{DXZ2020 thinning} and \cite{ChCl4}. Let us briefly recall these results.

\begin{theorem}(First exponential moment asymptotics from \cite[Theorem 2.3]{DXZ2020 thinning}). \label{thm:exponential moments}
We have
\begin{align*}
\mathbb E\big[e^{\gamma N(s)}\big]=C(\gamma)e^{\gamma\mu(s)+\frac{\gamma^2}{2}\sigma^2(s)}(1+\bigO(s^{-2/3})), \qquad \mbox{as } s \to + \infty
\end{align*}
uniformly for $\gamma$ in compact subsets of $\mathbb{R}$, where $\mu, \sigma^{2}$ are given by \eqref{mu sigma2 Pearcey} and $C(\gamma)$ is independent of $s$ and continuous as a function of $\gamma$.
\end{theorem}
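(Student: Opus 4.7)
The plan is to follow the standard Its--Izergin--Korepin--Slavnov recipe for determinantal point processes, combined with a Deift--Zhou nonlinear steepest descent analysis of an associated Riemann--Hilbert problem (RHP). Since the Pearcey process is determinantal with kernel $K^{\mathrm{Pe}}_{\rho}$, one has
\[
\mathbb{E}\bigl[e^{\gamma N(s)}\bigr] = \det\bigl(I - (1-e^{\gamma})\, K^{\mathrm{Pe}}_{\rho}\chi_{[-s,s]}\bigr),
\]
a Fredholm determinant on $L^{2}([-s,s])$. Because the Pearcey kernel is of integrable type in the IIKS sense, this determinant can be encoded via the solution $Y(z)$ of a $3\times 3$ RHP supported on $[-s,s]$, whose jump matrix is a rank-one perturbation of the identity depending linearly on the thinning factor $(1-e^{\gamma})$.

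\medskip
First I would derive a differential identity expressing $\partial_{s}\log \mathbb{E}\bigl[e^{\gamma N(s)}\bigr]$ in terms of limits of $Y$ (and its derivative) at the endpoints $\pm s$; this converts the asymptotic question into an asymptotic question for $Y$. Next I would rescale $z=s u$ and perform a $g$-function transformation built from the Pearcey equilibrium density
\[
\psi(u) = \tfrac{\sqrt{3}}{2\pi}\bigl(3 u^{1/3} - \rho\, u^{-1/3}\bigr), \qquad u>0,
\]
whose antiderivative reproduces the leading ``classical'' count $\mu(s)=\tfrac{3\sqrt{3}}{4\pi}s^{4/3}-\tfrac{\sqrt{3}\rho}{2\pi}s^{2/3}$. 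Then I would open lenses, solve a global (outer) parametrix away from the endpoints, and construct local (inner) parametrices near $\pm s$ from model RHPs built directly out of Pearcey integrals, matching them on small circles to the outer parametrix.

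\medskip
Substituting the resulting large-$s$ expansion of $Y$ into the differential identity should yield, uniformly for $\gamma$ in compact subsets of $\mathbb{R}$,
\[
\partial_{s} \log \mathbb{E}\bigl[e^{\gamma N(s)}\bigr] = \gamma\,\mu'(s) + \tfrac{\gamma^{2}}{2}\bigl(\sigma^{2}\bigr)'(s) + O(s^{-5/3}).
\]
Integrating from $s$ up to $+\infty$ turns the remainder into $O(s^{-2/3})$ and reproduces the Gaussian-in-$\gamma$ factor $e^{\gamma\mu(s)+\frac{\gamma^{2}}{2}\sigma^{2}(s)}$; the $s$-independent prefactor $C(\gamma)$ arises as an integration constant whose continuity in $\gamma$ follows from the uniformity of the RHP analysis. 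The main obstacle is the steepest descent itself: the Pearcey RHP is $3\times 3$ rather than $2\times 2$, its saddle geometry is substantially more intricate than in the Airy or sine cases (especially for $\rho>0$, when internal transition points appear), and the local parametrices at $\pm s$ are nearly as sophisticated as the Pearcey model problem that defines the process to begin with. Achieving the error $O(s^{-2/3})$, as opposed to a weaker $o(1)$ bound, requires careful bookkeeping in the matching condition and a quantitative small-norm argument for the residual RHP.
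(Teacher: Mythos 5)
This statement is not proved in the paper at all: it is imported verbatim as \cite[Theorem 2.3]{DXZ2020 thinning}, and the author explicitly uses it as a black box. So there is no in-paper argument to compare against; what you have written is an attempt to reprove the Dai--Xu--Zhang result from scratch. Your outline does identify the correct general strategy, and it is essentially the one used in the cited work: write $\mathbb{E}[e^{\gamma N(s)}]$ as the Fredholm determinant $\det(I-(1-e^{\gamma})K^{\mathrm{Pe}}_{\rho}\chi_{[-s,s]})$, exploit the integrable (IIKS) structure of the Pearcey kernel to encode it in a $3\times 3$ Riemann--Hilbert problem, derive a differential identity in $s$, and perform a Deift--Zhou steepest descent analysis before integrating back.

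The genuine gap is that everything beyond this strategic outline is deferred, and for a statement of this kind the deferred part \emph{is} the theorem. The content of the result lies entirely in the precise values of $\mu(s)$ and $\sigma^{2}(s)$, the existence of an $s$-independent constant $C(\gamma)$, the uniformity in $\gamma$ on compacts, and the error rate $\mathcal{O}(s^{-2/3})$; your sketch asserts the key display $\partial_{s}\log\mathbb{E}[e^{\gamma N(s)}]=\gamma\mu'(s)+\frac{\gamma^{2}}{2}(\sigma^{2})'(s)+\mathcal{O}(s^{-5/3})$ without deriving the differential identity, constructing the $g$-function, outer parametrix, or local parametrices, or carrying out the small-norm estimate that would justify the error term. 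In particular the logarithmic variance $\sigma^{2}(s)=\frac{4}{3\pi^{2}}\log s$ comes out of the subleading terms of that analysis and cannot be guessed from the equilibrium density alone. There is also a concrete numerical slip: with $\mu(s)=\frac{3\sqrt{3}}{4\pi}s^{4/3}-\frac{\sqrt{3}\rho}{2\pi}s^{2/3}$ counting points in $[-s,s]$, the limiting one-point density at $u>0$ is $\frac{1}{2}\mu'(u)=\frac{\sqrt{3}}{6\pi}(3u^{1/3}-\rho u^{-1/3})$, not $\frac{\sqrt{3}}{2\pi}(3u^{1/3}-\rho u^{-1/3})$ as you wrote, so your $g$-function as stated would not reproduce $\mu(s)$. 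In the context of this paper, the honest ``proof'' of Theorem \ref{thm:exponential moments} is the citation; if you intend to supply an independent proof, the Riemann--Hilbert analysis must actually be executed.
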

\begin{remark}
$C(\gamma)$ has in fact also been obtained in \cite{DXZ2020 thinning}, but this is not needed for us.
\end{remark}
\begin{theorem}(\cite[Lemma 2.1 and Theorem 1.2]{ChCl4}) \label{thm: rigidity with Claeys}
Let $X$ be a locally finite random point configuration on $\mathbb{R}$ distributed according to a given point process. Assume that $X$ has almost surely a smallest particle, let $x_{k}$ denote the $k$-th smallest point of $X$, and let $\widetilde{N}(x)$ be the random variable that counts the number of points in $X$ that are $\leq x$. Assume that there exist constants $\mathrm{C}, a >0$, $s_0\in\mathbb R$, $M > \sqrt{2/a}$ and functions $\tilde{\mu},\tilde{\sigma}:[s_0,+\infty)\to [0,+\infty)$ such that the following holds:
\begin{enumerate}
\item[(1)] We have
\begin{equation}\label{expmomentbound}
\mathbb{E} \big[e^{\gamma \widetilde{N}(s)}\big]\leq \mathrm{C} \, e^{\gamma \tilde{\mu}(s)+\frac{\gamma^{2}}{2}\tilde{\sigma}^2(s)},
\end{equation}
for all $\gamma\in[-M,M]$ and for all $s>s_0$.
\item[(2)] The functions $\tilde{\mu}$ and $\tilde{\sigma}$ are strictly increasing and differentiable, and they satisfy 
\begin{align*}
\lim_{s\to + \infty} \tilde{\mu}(s) = + \infty, \qquad  \lim_{s\to + \infty} \tilde{\sigma}(s) = + \infty, \qquad \lim_{s\to+\infty}\frac{s\tilde{\mu}'(s)}{\tilde{\sigma}^2(s)}=+\infty.
\end{align*}
Moreover, $s\mapsto s\tilde{\mu}'(s)$ is weakly increasing, $\tilde{\sigma}^2\circ\tilde{\mu}^{-1}:[\tilde{\mu}(s_0),+\infty)\to [0,+\infty)$ is strictly concave and $(\tilde{\sigma}^2\circ\tilde{\mu}^{-1})(s)\sim a\log s \mbox{ as } s\to +\infty$.
\end{enumerate}
Then, for any $\epsilon>0$, it holds that
\begin{align}
& \lim_{s \to + \infty} \mathbb{P}\left(\sup_{x>s}\left|\frac{\widetilde{N}(x)-\tilde{\mu}(x)}{\tilde{\sigma}^2(x)}\right| \leq   \sqrt{\frac{2}{a}}+\epsilon\right) = 1, \label{ChCl rig 1} \\
& \lim_{k_0\to\infty}\mathbb P\left( \sup_{k \geq k_{0}}\frac{|\tilde{\mu}(x_k)-k|}{\tilde{\sigma}^2(\tilde{\mu}^{-1}(k))}\leq \sqrt{\frac{2}{a}}+\epsilon \right)=1. \label{ChCl rig 2}
\end{align}
\end{theorem}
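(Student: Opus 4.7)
\medskip
\noindent\textbf{Proof proposal.} The plan is to combine Markov's inequality applied to \eqref{expmomentbound} with a discretization/union-bound argument on a well-chosen grid, and then extend from the grid to a supremum by monotonicity of $\widetilde N$ and $\tilde\mu$. For $\gamma \in (0, M]$, Markov's inequality gives
\begin{align*}
\mathbb{P}\bigl(\widetilde{N}(s) - \tilde\mu(s) \geq t\bigr) \leq e^{-\gamma(\tilde\mu(s) + t)}\,\mathbb{E}\bigl[e^{\gamma \widetilde{N}(s)}\bigr] \leq \mathrm{C}\,\exp\bigl(\tfrac{1}{2}\gamma^2 \tilde\sigma^2(s) - \gamma t\bigr).
\end{align*}
Choosing $t = c\,\tilde\sigma^2(s)$ and $\gamma = c$---admissible whenever $c \leq M$---yields, together with the symmetric lower-tail bound (obtained by applying the same derivation with $\gamma \in [-M, 0)$),
\begin{align*}
\mathbb{P}\bigl(|\widetilde{N}(s) - \tilde\mu(s)| \geq c\,\tilde\sigma^2(s)\bigr) \leq 2\mathrm{C}\,e^{-c^2 \tilde\sigma^2(s)/2}.
\end{align*}
Since $M > \sqrt{2/a}$, one can take $c = \sqrt{2/a} + \epsilon/2$, giving $c^2 a/2 > 1$. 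The asymptotic $(\tilde\sigma^2\circ\tilde\mu^{-1})(s)\sim a\log s$ then turns the right-hand side into $\lesssim \tilde\mu(s)^{-(1+\delta)}$ for some $\delta > 0$.

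\medskip\noindent For \eqref{ChCl rig 1} I would discretize along $s_n := \tilde\mu^{-1}(n)$, so that $\tilde\mu(s_n) = n$ and a union bound over $n \geq n_0$ gives $\mathbb{P}(E_{n_0}^c) \leq \sum_{n\geq n_0} 2\mathrm{C}\,n^{-(1+\delta)}\to 0$, where $E_{n_0} := \bigcap_{n\geq n_0}\{|\widetilde{N}(s_n)-\tilde\mu(s_n)|\leq c\,\tilde\sigma^2(s_n)\}$. For $x \in [s_n, s_{n+1}]$ with $n \geq n_0$, monotonicity of $\widetilde N$ and $\tilde\mu$ implies, on $E_{n_0}$,
\begin{align*}
|\widetilde{N}(x) - \tilde\mu(x)| \leq c\,\tilde\sigma^2(s_{n+1}) + \bigl(\tilde\mu(s_{n+1})-\tilde\mu(s_n)\bigr) = c\,\tilde\sigma^2(s_{n+1}) + 1.
\end{align*}
Dividing by $\tilde\sigma^2(x) \geq \tilde\sigma^2(s_n)$ and using both $\tilde\sigma^2(s_{n+1})/\tilde\sigma^2(s_n)\to 1$ and $1/\tilde\sigma^2(s_n) \to 0$, the supremum over $x \geq s_{n_0}$ is at most $c + o(1) \leq \sqrt{2/a} + \epsilon$ for $n_0$ large, which is \eqref{ChCl rig 1}.

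\medskip\noindent For \eqref{ChCl rig 2} I would run the dual argument. Set $r_k := c\,\tilde\sigma^2(\tilde\mu^{-1}(k))$ and $y_k^{\pm} := \tilde\mu^{-1}(k \pm r_k)$. Because $\widetilde{N}(x_k) = k$ a.s.\ and $\widetilde N$ is nondecreasing, $\{x_k > y\} = \{\widetilde{N}(y) < k\}$, so
\begin{align*}
\{\tilde\mu(x_k) - k > r_k\} &= \{\widetilde{N}(y_k^+) - \tilde\mu(y_k^+) < -r_k\}, \\
\{k - \tilde\mu(x_k) > r_k\} &= \{\widetilde{N}(y_k^-) - \tilde\mu(y_k^-) \geq r_k\}.
\end{align*}
Since $y_k^{\pm}\to\infty$ with $\tilde\sigma^2(y_k^{\pm})\sim a\log k$, the Chernoff bound at $s = y_k^{\pm}$ is again $\lesssim k^{-(1+\delta)}$, summable in $k$; a union bound over $k \geq k_0$ then gives \eqref{ChCl rig 2}.

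\medskip\noindent The derivation is essentially routine once \eqref{expmomentbound} is available; the delicate point is the interpolation step, which requires that on the grid $s_n = \tilde\mu^{-1}(n)$ the increment $\tilde\mu(s_{n+1})-\tilde\mu(s_n) = 1$ is $o(\tilde\sigma^2(s_n))$, and that $\tilde\sigma^2$ varies slowly enough on $[s_n, s_{n+1}]$ for $\tilde\sigma^2(s_{n+1})/\tilde\sigma^2(x)$ to tend to $1$ uniformly in $x \in [s_n, s_{n+1}]$. These are precisely the roles played by the assumptions $s\tilde\mu'(s)/\tilde\sigma^2(s) \to \infty$, the weak monotonicity of $s\mapsto s\tilde\mu'(s)$, and the strict concavity of $\tilde\sigma^2\circ\tilde\mu^{-1}$ combined with its logarithmic asymptotic.
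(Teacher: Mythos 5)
This theorem is not proved in the paper at all: it is imported verbatim from \cite[Lemma 2.1 and Theorem 1.2]{ChCl4}, so there is no in-paper proof to compare against. Your Chernoff-plus-union-bound argument is the standard route to such statements and, as far as I can check, it is essentially correct: the exponential Markov bound with $\gamma=\pm c$ and deviation $c\,\tilde\sigma^{2}(s)$, the grid $s_{n}=\tilde\mu^{-1}(n)$ with summable tail $n^{-(1+\delta)}$, the monotone interpolation costing only $1+o(\tilde\sigma^{2}(s_{n}))$, and the duality $\{x_{k}>y\}=\{\widetilde N(y)<k\}$ all go through.

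Three small points deserve attention. First, you need $c=\sqrt{2/a}+\epsilon/2\leq M$ for the tilt to be admissible; this holds only for $\epsilon$ small, so you should note that it suffices to prove the claim for small $\epsilon$ (this is exactly where the hypothesis $M>\sqrt{2/a}$ enters). Second, in the argument for \eqref{ChCl rig 2} the deviation $r_{k}=c\,\tilde\sigma^{2}(\tilde\mu^{-1}(k))$ is not equal to $c\,\tilde\sigma^{2}(y_{k}^{\pm})$, so you cannot literally invoke the bound you derived (which had $t=c\,\tilde\sigma^{2}(s)$ at the same $s$); you must run Chernoff with general $t$, optimize $\gamma=t/\tilde\sigma^{2}(y_{k}^{\pm})$, check that this $\gamma$ tends to $c<M$, and use $\tilde\sigma^{2}(y_{k}^{\pm})\sim a\log k$ to recover the exponent $c^{2}a/2>1$. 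You clearly have this in mind, but as written the step is elided. Third, your closing diagnosis is slightly off: the interpolation needs only that $\tilde\sigma$ is increasing, that $\tilde\sigma^{2}(s_{n+1})/\tilde\sigma^{2}(s_{n})\to1$, and that $1/\tilde\sigma^{2}(s_{n})\to0$, all of which already follow from $(\tilde\sigma^{2}\circ\tilde\mu^{-1})(s)\sim a\log s$ and $\tilde\sigma\to\infty$; the hypotheses on $s\tilde\mu'(s)$ and the strict concavity of $\tilde\sigma^{2}\circ\tilde\mu^{-1}$ are not actually used by your argument (they belong to the finer framework of \cite{ChCl4}). None of these affects the correctness of the approach.
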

\begin{proof}[Proof of \eqref{upper bound rigidity 1} and \eqref{upper bound rigidity 2}]
Let $Y=\{\xi_{k}\}_{k \geq 1}$ be a random point configuration distributed according to the Pearcey process. Since $Y$ does not have almost surely a smallest particle, we cannot apply Theorem \ref{thm: rigidity with Claeys} to the Pearcey process. Instead we define
\begin{align*}
X = \{|\xi| : \; \xi  \in Y \}.
\end{align*}
Since $Y$ is locally finite, $X$ has a smallest particle. Note that $X$ is distributed according to a point process which is not determinantal, but that does not matter for Theorem \ref{thm: rigidity with Claeys}. Let $\widetilde{N}(x)$ denote the number of points in $X$ that are in the interval $[0,x]$. Using Theorem \ref{thm:exponential moments}, it is directly seen that $X$ verifies the assumptions of Theorem \ref{thm: rigidity with Claeys} with 
\begin{align*}
\tilde{\mu} = \mu, \quad \tilde{\sigma} = \sigma, \quad M=10, \quad \mathrm{C} = 2 \max_{\gamma \in [-M,M]}C(\gamma), \quad a=\frac{1}{\pi^2}, \quad s_{0} \mbox{ sufficiently large}.
\end{align*}
By definition of $\widetilde{N}$, we have
\begin{align*}
\widetilde{N}(x) = \# (X \cap [0,x]) = \# (Y \cap [-x,x]) = N(x).
\end{align*}
Now \eqref{upper bound rigidity 1}--\eqref{upper bound rigidity 2} follow straightforwardly from \eqref{ChCl rig 1}--\eqref{ChCl rig 2} after substituting $\tilde{\mu}, \tilde{\sigma}, a, \widetilde{N}(x)$ by $\mu,\sigma,\frac{1}{\pi^{2}}$, $N(x)$, respectively. 
\end{proof}

\subsection*{Numerical support for Theorem \ref{thm:rigidity Pearcey} and Conjecture \ref{conj:rigidity Pearcey}}
We provide here some numerical data to support the validity of Theorem \ref{thm:rigidity Pearcey} and Conjecture \ref{conj:rigidity Pearcey}. For this, we use the result \cite[Theorem 1.1]{BleherKuijlaarsIII} of Bleher and Kuijlaars which states that the (properly rescaled) eigenvalues around $0$ of a large Gaussian random matrix with an external source approximate the random points of the Pearcey process. More precisely, consider the random matrix ensemble
\begin{align*}
\frac{1}{Z_{n}}e^{-n\mathrm{Tr}(\frac{1}{2}M^{2}-AM)}dM, \qquad n \in \mathbb{N}_{>0}, \; n \mbox{ even},
\end{align*}
defined on the set $\{M\}$ of all $n \times n$ Hermitian matrices, where $A$ is a diagonal matrix with two eigenvalues $\pm (1+\frac{\rho}{2\sqrt{n}})$ of equal multiplicities. The eigenvalues $\lambda_{1}^{(n)},\ldots,\lambda_{n}^{(n)}$ of $M$ are distributed according to a determinantal point process on $\mathbb{R}$ whose kernel $K_{n,\rho}$ satisfies (see \cite[Theorem 1.1]{BleherKuijlaarsIII}):
\begin{align}\label{convergence of kernel}
\lim_{n \to + \infty}\frac{1}{n^{3/4}} K_{n,\rho}\Big(\frac{x}{n^{3/4}},\frac{y}{n^{3/4}}\Big) = K_{\rho}^{\mathrm{Pe}}(x,y), \qquad x,y \in \mathbb{R}.
\end{align}
Let $N_{n}(x) := \#\{\lambda_{j}^{(n)} : \lambda_{j}^{(n)} \in (-\frac{x}{n^{3/4}},\frac{x}{n^{3/4}})\}$. One expects from \eqref{convergence of kernel} that $N_{n}(x)$ converges in distribution to $N(x)$ as $n \to + \infty$. Let us choose the numbering of the $\lambda_{j}^{(n)}$'s such that $|\lambda_{1}^{(n)}| \leq \ldots \leq |\lambda_{n}^{(n)}|$. From \eqref{convergence of kernel}, one also expects that for any fixed $k \in \mathbb{N}_{>0}$, the random variables $n^{3/4}\lambda_{1}^{(n)},\ldots,n^{3/4}\lambda_{k}^{(n)}$ converge in distribution to $x_{1},\ldots,x_{k}$ as $n \to + \infty$.

\begin{figure}[h]
\begin{center}
\begin{tikzpicture}
\node at (0,0) {\includegraphics[scale=0.3]{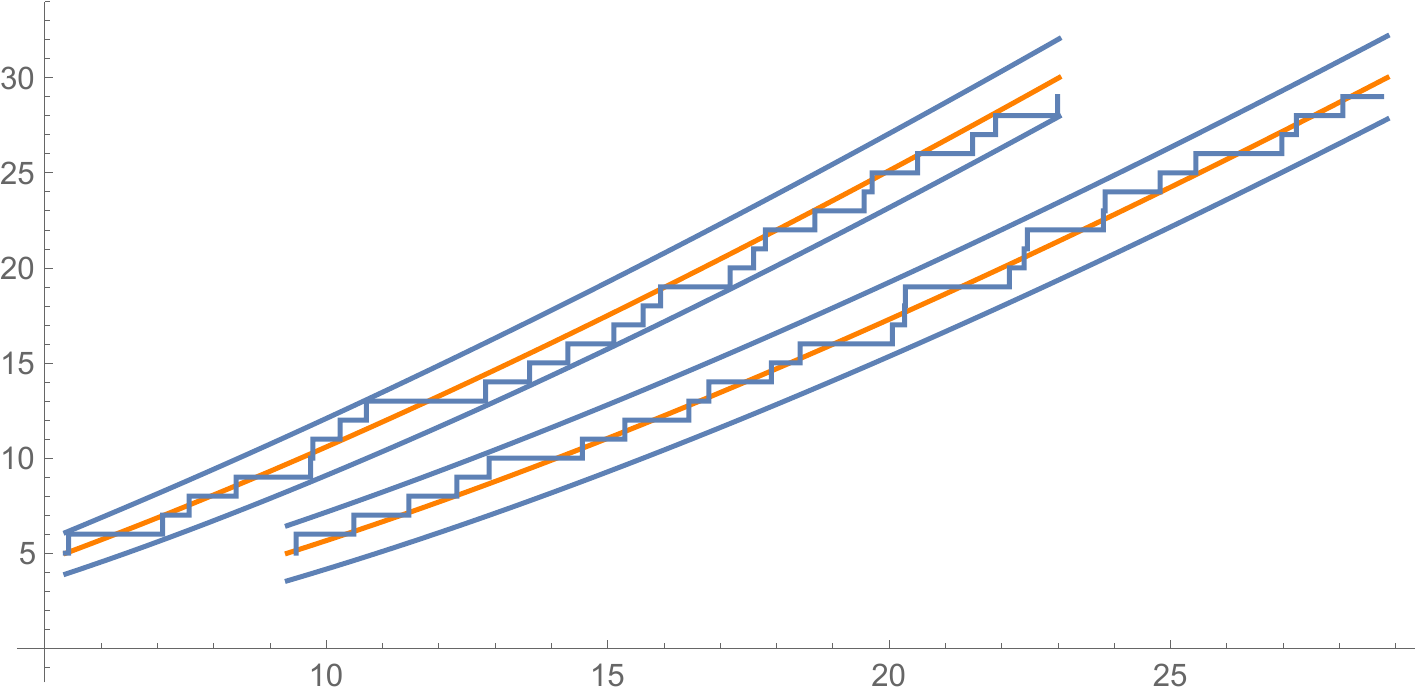}};
\node at (0.2,1.3) {\small $\rho=-1.31$};
\node at (2.3,0.2) {\small $\rho=2.54$};
\end{tikzpicture}
\begin{tikzpicture}
\node at (0,0) {\includegraphics[scale=0.3]{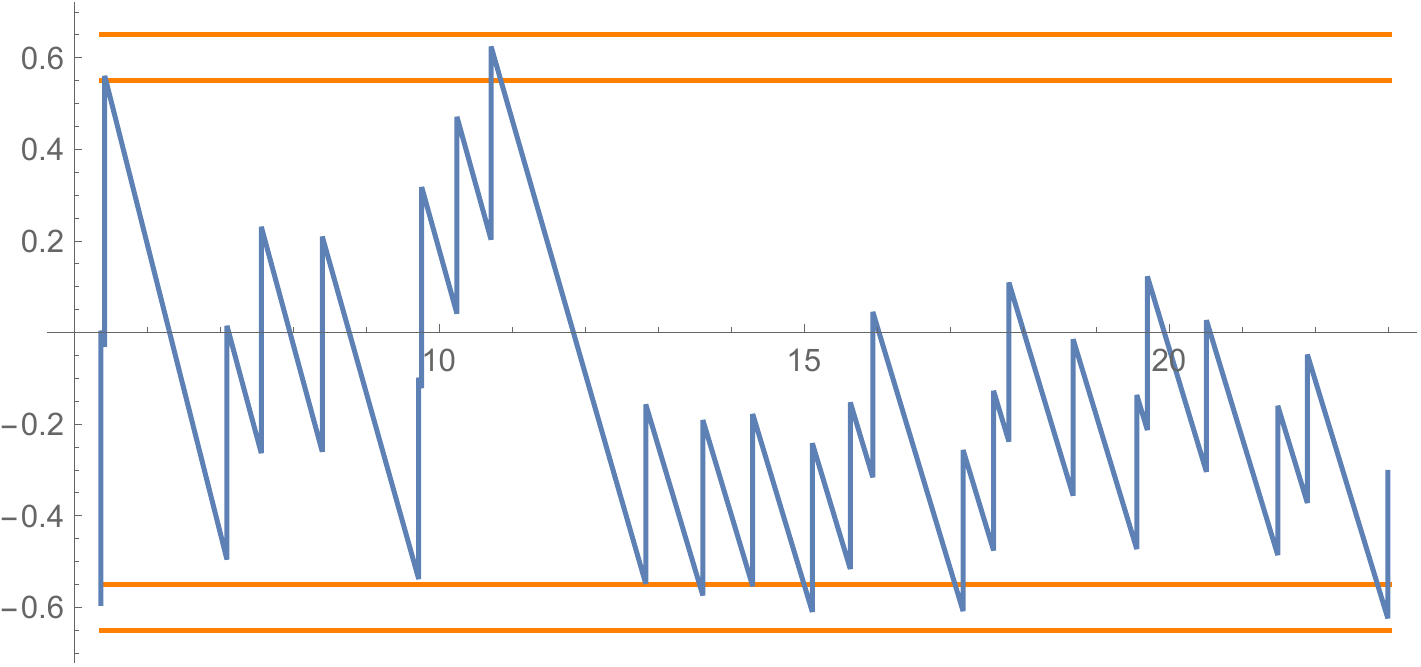}};
\node at (2.5,1) {\small $\rho=-1.31$};
\end{tikzpicture}
\end{center}
\vspace{-0.5cm}\caption{\label{fig:rigidity of the counting function}Numerical support for \eqref{upper bound rigidity 1} and \eqref{lower bound rigidity 1}.} 
\end{figure}

\medskip The probabilistic bound \eqref{upper bound rigidity 1} implies that for any $\epsilon >0$, the probability that
\begin{align}\label{rewriting of thm 1}
\mu(x)- \Big( \frac{4\sqrt{2}}{3\pi}+\epsilon \Big) \log x \leq N(x) \leq \mu(x) + \Big( \frac{4\sqrt{2}}{3\pi}+\epsilon \Big) \log x \qquad \mbox{for all }x>s
\end{align}
tends to $1$ as $s \to + \infty$. 
In Figure \ref{fig:rigidity of the counting function} (left), the blue step-like curves represent some graphs of the random function $x \mapsto N_{n}(x)$ for $n=400$ and two values of $\rho$, the blue smooth curves are the upper and lower bounds in \eqref{rewriting of thm 1} with $\epsilon=0.05$, and the orange curves represent $x\mapsto \mu(x)$. In Figure \ref{fig:rigidity of the counting function} (right), the orange lines indicate the heights $\pm \frac{4\sqrt{2}}{3\pi} \pm \epsilon$ with $\epsilon=0.05$, and the blue curve is a graph of the random function
\begin{align*}
x \mapsto \frac{N_{n}(x)-\mu(x)}{\log x},
\end{align*}
for $n=400$ and $\rho=-1.31$. We see that some of the local minimums and maximums of this function are in the bands between the orange lines, which supports the validity of both \eqref{upper bound rigidity 1} and \eqref{lower bound rigidity 1}.
\begin{figure}[h]
\begin{center}
\begin{tikzpicture}
\node at (0,0) {\includegraphics[scale=0.3]{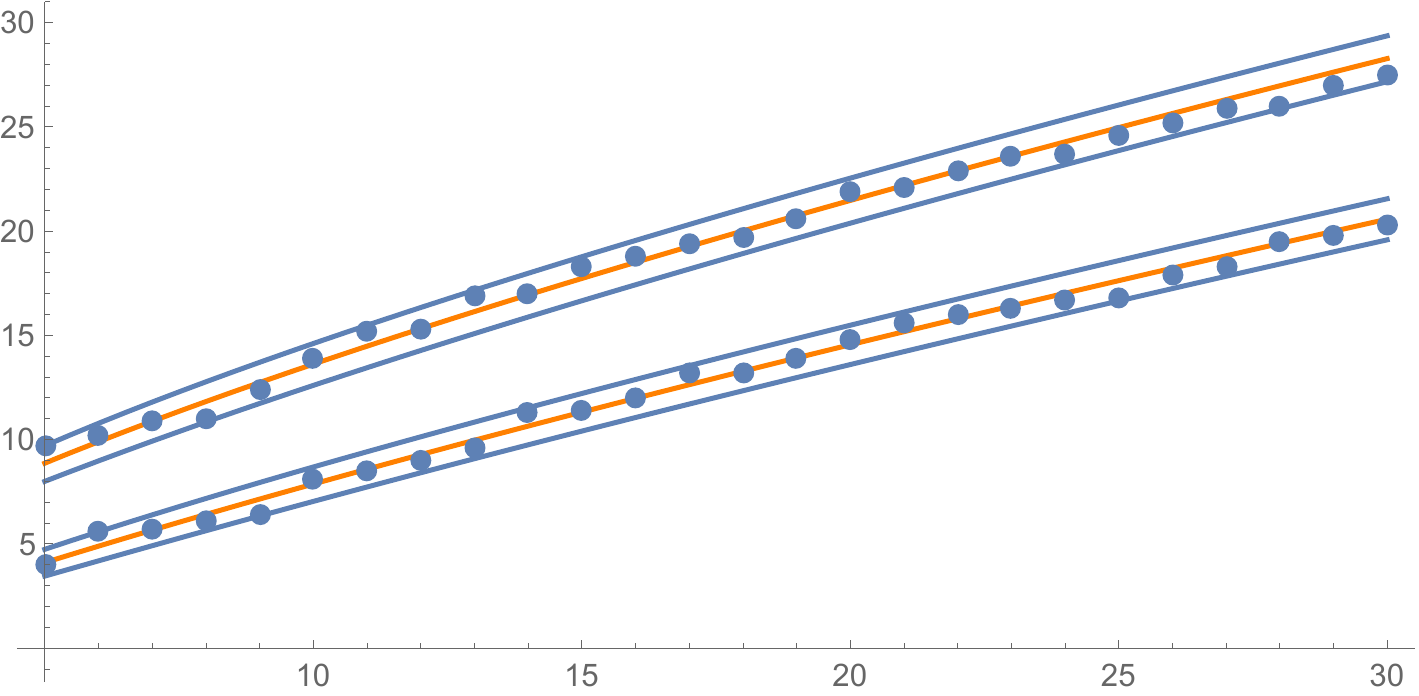}};
\node at (1.3,1.5) {\small $\rho=2.19$};
\node at (2,-0.1) {\small $\rho=-3.24$};
\end{tikzpicture}
\begin{tikzpicture}
\node at (0,0) {\includegraphics[scale=0.3]{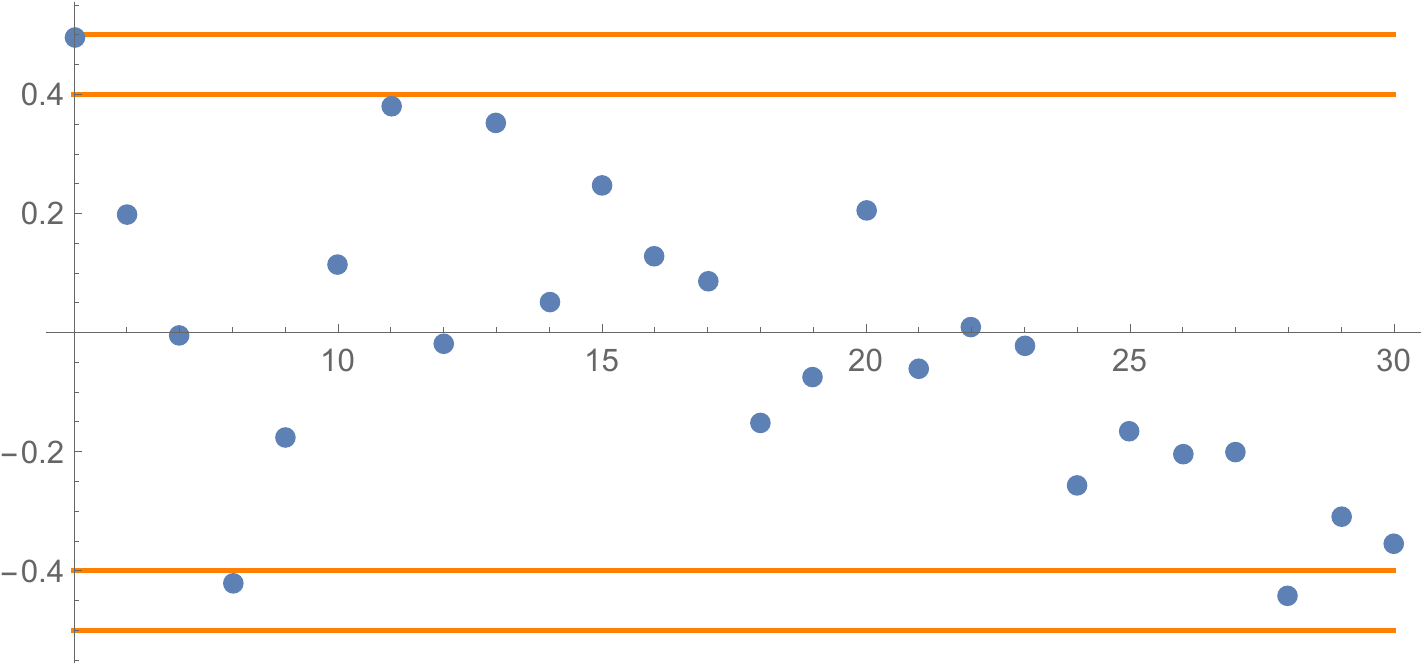}};
\node at (2.7,0.9) {\small $\rho=2.19$};
\end{tikzpicture}
\end{center}
\vspace{-0.5cm}\caption{\label{fig:rigidity of the points}
 Numerical support for \eqref{upper bound rigidity 2} and \eqref{lower bound rigidity 2}.} 
\end{figure}

\medskip

The result \eqref{upper bound rigidity 2} implies that for any $\epsilon >0$, the probability that
\begin{align}\label{rewriting of thm 2}
\mu^{-1}\bigg( k - \Big( \frac{\sqrt{2}}{\pi}+\epsilon \Big) \log k \bigg) \leq x_{k} \leq \mu^{-1}\bigg( k + \Big( \frac{\sqrt{2}}{\pi}+\epsilon \Big) \log k \bigg) \qquad \mbox{for all } k \geq k_{0}
\end{align}
tends to $1$ as $k_{0} \to + \infty$. In Figure \ref{fig:rigidity of the points} (left), the blue dots represent the random points $(k,n^{3/4}\lambda_{k}^{(n)})$ for $n=400$ and two values of $\rho$, the blue curves are the upper and lower bounds in \eqref{rewriting of thm 2} with $\epsilon=0.05$, and the orange curves represent
\begin{align*}
k \mapsto \mu^{-1}(k) = \bigg( \frac{1}{3}\Big( \rho + \sqrt{4\sqrt{3} \pi k + \rho^{2}} \Big) \bigg)^{3/2}.
\end{align*}
In Figure \ref{fig:rigidity of the points} (right), the orange lines indicate the heights $\pm \frac{\sqrt{2}}{\pi} \pm \epsilon$ with $\epsilon=0.05$, and the blue dots have coordinates
\begin{align*}
\bigg(k,\frac{\mu(n^{3/4}\lambda_{k}^{(n)})-k}{\log k} \bigg),
\end{align*}
for $n=400$ and $\rho=2.19$. We see that some of these points are in the bands between the orange lines, which supports the validity of both \eqref{upper bound rigidity 2} and \eqref{lower bound rigidity 2}.

\paragraph{Acknowledgements.}
The author was supported by the European Research Council, Grant Agreement No. 682537.

\small


\begin{thebibliography}{99}

\bibitem{AdlerOrantinMoerbeke} M. Adler, N. Orantin and P. van Moerbeke, Universality for the Pearcey process, \textit{Phys. D} \textbf{239} (2010), 924--941.

\bibitem{AdlerMoerbeke} M. Adler and P. van Moerbeke, PDEs for the Gaussian ensemble with external source and the Pearcey distribution, \textit{Comm. Pure Appl. Math.} \textbf{60} (2007), 1261--1292.

\bibitem{ArguinBeliusBourgade}
L.-P. {Arguin}, D.~{Belius}, and P.~{Bourgade},
\newblock {Maximum of the characteristic polynomial of random unitary matrices}. \textit{Comm. Math. Phys.} \textbf{349} (2017), 703--751.



\bibitem{BertolaCafasso} M. Bertola and M. Cafasso, The transition between the gap probabilities from the Pearcey to the Airy process--a Riemann--Hilbert approach, \textit{Int. Math. Res. Not.} \textbf{2012} (2012), 1519--1568.

\bibitem{BleherKuijlaarsIII} P.M. Bleher and  A.B.J.  Kuijlaars, Large $n$ limit of Gaussian random matrices  with external source, part III: double scaling limit, \textit{Comm. Math. Phys.} \textbf{270} (2007), 481--517.



\bibitem{BreHik1} E. Br\'{e}zin and S. Hikami, Level spacing of random matrices in an external source, \textit{Phys. Rev. E.} \textbf{58} (1998), 7176--7185.

\bibitem{BreHik2} E. Br\'{e}zin and S. Hikami, Universal singularity at the closure of a gap in a random matrix theory, \textit{Phys. Rev. E.} \textbf{57} (1998), 4140--4149.

\bibitem{ChCl4} C. Charlier and T. Claeys, Global rigidity and exponential moments for soft and hard edge point processes, arXiv:2002.03833.

\bibitem{ChhaibiMadauleNajnudel}
R.~Chhaibi, T.~Madaule, and J.~Najnudel,
\newblock On the maximum of the {C}$\beta${E} field.
\textit{Duke Math. J.} \textbf{167} (2018), 2243--2345.

\bibitem{CFLW} T. Claeys, B. Fahs, G. Lambert, and C. Webb, How much can the eigenvalues of a random Hermitian matrix fluctuate?, arxiv:1906.01561.

\bibitem{CGMY} T. Claeys, G. Glesner, A. Minakov and M. Yang, Asymptotics for averages over classical orthogonal ensembles, arXiv:2008.07785.

\bibitem{DXZ2020} D. Dai, S-X Xu and L. Zhang, Asymptotics of Fredholm determinant associated with the Pearcey kernel, arXiv:2002.06370.

\bibitem{DXZ2020 thinning} D. Dai, S-X Xu and L. Zhang, On the deformed Pearcey determinant, arXiv:2007.12691.

\bibitem{ErdorKrugerSchroder} L. Erd\H{o}s, T. Kr\"{u}ger and D. Schr\"{o}der, Cusp universality for random matrices I: local Law and the complex Hermitian case, \textit{Comm. Math. Phys.} \textbf{378} (2020), 1203--1278.


 


\bibitem{ErdosYauYin}
L. Erd\H{o}s, H.-T. Yau, and J. Yin, Rigidity of eigenvalues of generalized Wigner matrices, {\em Adv. Math.} {\bf 229} (2012), no. 3, 1435--1515.

\bibitem{GeuZhang} D. Geudens and L. Zhang, Transitions between critical kernels: from the tacnode kernel and  critical kernel in the two-matrix model to the Pearcey  kernel, \textit{Int. Math. Res. Not. IMRN} \textbf{2015} (2015), 5733--5782.

\bibitem{Gustavsson} J. Gustavsson, Gaussian fluctuations of eigenvalues in the GUE, {\em Ann. Inst. H. Poincare Probab. Statist.} {\bf 41} (2005), 151--178.


\bibitem{HHN2} W. Hachem,  A. Hardy  and  J. Najim, Large  complex correlated Wishart matrices: the Pearcey kernel and expansion at the hard edge, \textit{Electron. J. Probab.} \textbf{21} (2016), 36 pp.

\bibitem{HolcombPaquette} D. Holcomb and E. Paquette, The maximum deviation of the Sine-{$\beta$} counting process, {\em Electron. Commun. Probab.} {\bf 23} (2018), paper no. 58, 13 pp.

\bibitem{Johansson98}
K. Johansson, On fluctuations of eigenvalues of random Hermitian matrices, {\em Duke Math. J.} {\bf 91} (1998), no. 1, 151--204.


\bibitem{LambertCircular} G. Lambert, Mesoscopic central limit theorem for the circular beta-ensembles and applications, arxiv:1902.06611.


\bibitem{OkounkovReshetikhin} A. Okounkov and N. Reshetikhin, Random skew plane partitions and the Pearcey process, \textit{Comm. Math. Phys.} \textbf{269} (2007), 571--609.

\bibitem{PaquetteZeitouni}
E.~Paquette and O.~Zeitouni,
\newblock The maximum of the {C}{U}{E} field,
\textit{Int. Math. Res. Not.} \textbf{2018} (2018), no. 16, 5028--5119.



\bibitem{TradWidPearcey} C. Tracy and H. Widom, The Pearcey process, \textit{Comm. Math. Phys.} \textbf{263} (2006), 381--400.
\end{thebibliography}
\end{document}